\def\Q{{\mathbb Q}}
\def\D{{\mathcal D}}
\newtheorem{theorem}{Theorem}[section]
\newtheorem{lemma}[theorem]{Lemma}
\newtheorem{prop}[theorem]{Proposition}
\newtheorem{cor}[theorem]{Corollary}
\begin{document}
\title{Towards the (ir)rationality of values of Dirichlet series}

\author{Michael Coons}
\address{School of Math.~and Phys.~Sciences\\
University of Newcastle\\
Callaghan\\
Australia}
\email{Michael.Coons@newcastle.edu.au}

\author{Daniel Sutherland}
\address{School of Math.~and Phys.~Sciences\\
University of Newcastle\\
Callaghan\\
Australia}
\email{Daniel.Sutherland@newcastle.edu.au}

\subjclass[2010]{Primary 11J72; 11C20 Secondary 15B05}%
\keywords{Riemann zeta function, Hankel matrix, irrational numbers}%

\date{\today}

\begin{abstract}
We show that if $F(s)$ is a nondegenerate ordinary Dirichlet series with nonnegative coefficients and $F(k)$ is a rational number for all large enough positive integers $k$, then the denominators of those rational numbers are unbounded. In particular, our result holds for the Riemann zeta function over any arithmetic progression. These results are derived via upper bounds on associated Hankel determinants.
\end{abstract}

\maketitle

\vspace{-.5cm}
\section{Introduction}

The values of the Riemann zeta function at positive even integers were determined by Euler nearly 300 years ago. Together with Lindemann's proof of the transcendence of $\pi$, for over 130 years we have known that $\zeta(2n)$ is transcendental for $n\geqslant 1$. The complementary irrationality results for zeta values at odd integers has been of great importance in the mathematical community for some time. Comparitively recently (only 35 years ago), Ap\'ery \cite{Apery} showed that $\zeta(3)$ was irrational, though unfortunately, his proof does not extend to other odd zeta values; see also Beukers \cite{Beukers}. The story ends here for irrationality of specific zeta values, though one can say more with less specified outcomes. Rivoal \cite{Rivoal} has shown that infinitely many odd zeta values are irrational, and Zudilin \cite{Zudilin} has shown that one of $\zeta(5)$, $\zeta(7)$, $\zeta(9)$, or $\zeta(11)$ is irrational. 

Similar to Rivoal's result, this paper concerns (ir)rationality of ordinary Dirichlet series, with particular attention to zeta values over arithmetic progressions. Concerning the specific case of Riemann's zeta function, in this paper we contribute the following result.

\begin{theorem}\label{mainzeta} Let $a$ and $b$ be positive integers and let $\zeta_{a,b}(k)=\sum_{m\geqslant 1}{m^{-(ak+b)}}$. Suppose for some $R>0$ that $\zeta_{a,b}(k)\in\Q$ for all integers $k\geqslant R$. For each $k\geqslant R$, define the positive pair of coprime integers $p_k$ and $q_k$ by ${p_k}/{q_k}=\zeta_{a,b}(k)$. Then the sequence $\{q_k\}_{k\geqslant R}$ is unbounded.
\end{theorem}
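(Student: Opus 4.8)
The plan is to argue by contradiction, playing an \emph{arithmetic} lower bound against an \emph{analytic} upper bound for the Hankel determinants of the shifted sequence. Since only integer arguments $k\geqslant R$ enter, we may assume $R\in\Z_{\geqslant 1}$ and set $c_n:=\zeta_{a,b}(R+n)$ for $n\geqslant 0$. Suppose $\{q_k\}_{k\geqslant R}$ is bounded, say $q_k\leqslant Q$; then $L:=\lcm(1,2,\ldots,Q)$ is a common denominator, so $L\,\zeta_{a,b}(k)\in\Z$ for every integer $k\geqslant R$. For each $n\geqslant 0$ form the $(n+1)\times(n+1)$ Hankel matrix $H_n:=\big(c_{i+j}\big)_{0\leqslant i,j\leqslant n}$ with determinant $D_n:=\det H_n$. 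Every entry equals $\zeta_{a,b}(R+i+j)$ with $R+i+j\geqslant R$, so $L\,H_n$ has integer entries and hence $L^{\,n+1}D_n\in\Z$.

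The decisive step is to recognise $\{c_n\}$ as a moment sequence. Writing $w_m:=m^{-(aR+b)}$ and $x_m:=m^{-a}$, one has $c_n=\sum_{m\geqslant 1}w_m\,x_m^{\,n}$, so $c_n$ is the $n$-th moment of the positive discrete measure $\mu=\sum_{m\geqslant 1}w_m\,\delta_{x_m}$, whose support $\{m^{-a}:m\geqslant 1\}\subset(0,1]$ is infinite and whose total mass $W:=\sum_{m\geqslant 1}w_m=\zeta_{a,b}(R)$ is finite. The Heine (Andr\'eief) identity then gives
\[
D_n=\sum_{1\leqslant m_0<m_1<\cdots<m_n}\Big(\prod_{i=0}^{n}w_{m_i}\Big)\prod_{0\leqslant i<j\leqslant n}\big(x_{m_i}-x_{m_j}\big)^2,
\]
a sum of strictly positive terms (the tuple $m_i=i+1$ already contributes one), so $D_n>0$. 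Consequently $L^{\,n+1}D_n$ is a positive integer, whence $D_n\geqslant L^{-(n+1)}$.

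For the upper bound I would use that all support points lie in $(0,1]$, so $(x_{m_i}-x_{m_j})^2<1$. Discarding these factors bounds $D_n$ by the $(n+1)$-th elementary symmetric function of the weights,
\[
D_n\leqslant\sum_{1\leqslant m_0<\cdots<m_n}\prod_{i=0}^{n}w_{m_i}=e_{n+1}(w_1,w_2,\ldots)\leqslant\frac{W^{\,n+1}}{(n+1)!},
\]
the last inequality holding because expanding $W^{\,n+1}=\big(\sum_m w_m\big)^{n+1}$ over ordered $(n+1)$-tuples counts each distinct-index product $(n+1)!$ times among its nonnegative terms. Combining the two estimates gives $(n+1)!\leqslant(LW)^{\,n+1}$, which fails for all large $n$ since the factorial outgrows any fixed exponential. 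This contradiction shows $\{q_k\}_{k\geqslant R}$ cannot be bounded.

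I expect the main obstacle to be the upper bound, and specifically extracting \emph{super-exponential} (factorial) decay of $D_n$: the crude estimate $(x_{m_i}-x_{m_j})^2<1$ throws away a great deal, so the decay must be supplied entirely by the rapidly decreasing weights $w_m=m^{-(aR+b)}$, which is exactly what forces $e_{n+1}$ to beat $L^{-(n+1)}$. The only bookkeeping subtlety is that the largest argument appearing in $H_n$ is $R+2n\geqslant R$, so the integrality $L^{\,n+1}D_n\in\Z$ relies solely on the hypothesis for $k\geqslant R$ and invokes no value outside the assumed range.
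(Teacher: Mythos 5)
Your proof is correct, but it takes a genuinely different route from the paper's. The paper deduces Theorem~\ref{mainzeta} from a general result, Theorem~\ref{main}: for \emph{any} nondegenerate ordinary Dirichlet series with nonnegative coefficients, $H_n^{(r)}[F]>0$ and $\log H_n^{(r)}[F]<-cn^2$. The positivity there comes from Monien's Heine-type identity (Theorem~\ref{thm:M}) --- essentially the same Cauchy--Binet/Andr\'eief expansion you invoke --- but the decay is obtained quite differently: by Dodgson condensation (Lemma~\ref{prop:rr}) combined with asymptotics for the ratio $F(s+1)/F(s)$ (Lemma~\ref{MandN}, Corollary~\ref{govergasym}), packaged in Proposition~\ref{prop:decreaseN}. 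You instead exploit the moment structure of $\zeta_{a,b}$ directly: writing the entries as moments of the measure $\sum_m w_m\delta_{m^{-a}}$ with nodes in $(0,1]$, you get the upper bound by simply discarding the Vandermonde factors, yielding $D_n\leqslant W^{n+1}/(n+1)!$. This factorial decay, $\log D_n\lesssim -n\log n$, is weaker than the paper's $-cn^2$, but it is still superexponential, which is all that is needed to contradict a bounded common denominator $L$; and your arithmetic step ($L^{n+1}D_n$ a positive integer) is the same device as in the paper's proof of Theorem~\ref{thm:main}. What each approach buys: yours is shorter and more elementary --- no condensation, no ratio asymptotics --- and would in fact generalize verbatim to any nondegenerate nonnegative ordinary Dirichlet series (nodes $m^{-1}$, weights $f(m)m^{-r}$). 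The paper's stronger bound is what delivers the quantitative refinement of Theorem~\ref{thm:main}: exponential growth of $\D_m[F]=\lcm(q_{R+2},\ldots,q_{R+m})$. Your estimate gives only $\D_{2n}\geqslant ((n+1)!)^{1/(n+1)}/W\sim (n+1)/(eW)$, i.e.\ linear growth of the lcm --- enough for unboundedness of $\{q_k\}$, but not for the exponential lower bound. One minor point worth making explicit in a final write-up: the Heine expansion over an infinite discrete measure should be justified by truncating to $m\leqslant M$ and letting $M\to\infty$ (determinants are continuous in the entries and all terms are nonnegative), though this is standard and is exactly the content of the paper's Theorem~\ref{thm:M}.
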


An analogous result to Theorem \ref{mainzeta} is true replacing $\zeta_{a,b}(s)$ with any nondegenerate ordinary Dirichlet series with nonnegative coefficients. We prove the following generalisation in this paper. 

\begin{theorem}\label{thm:main}
Let $F(s)=\sum_{n\geqslant 1} f(n)n^{-s}$ be a nondegenerate ordinary Dirichlet series with $f(n)\geqslant 0$ for all $n$, which is convergent for $\Re(s)\geqslant s_0$. Suppose that $F(k)\in\Q$ for all integers $k\geqslant R\geqslant s_0-2$, and for each $k\geqslant R$, define the positive pair of coprime integers $p_k$ and $q_k$ by ${p_k}/{q_k}=F(k)$ and set
	\[
		\D_m[F] = {\rm lcm}(q_{R+2},q_{R+3},\ldots,q_{R+m}).
	\]
Then the common denominator $\D_m[F]$ grows at least exponentially in $m$.
\end{theorem}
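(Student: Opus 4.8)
The plan is to study, for each $n$, the $n\times n$ Hankel determinant built from consecutive values of $F$, namely
\[
H_n=\det\bigl(F(R+2+i+j)\bigr)_{0\le i,j\le n-1},
\]
and to squeeze it between a lower bound coming from rationality and an upper bound coming from its moment structure. The starting observation is that for $l\ge 0$,
\[
F(R+2+l)=\sum_{k\ge1}f(k)\,k^{-(R+2)}\bigl(k^{-1}\bigr)^{l}=\int_{(0,1]}x^{l}\,d\nu(x),
\]
where $\nu=\sum_{k\ge1}w_k\,\delta_{1/k}$ with weights $w_k=f(k)k^{-(R+2)}\ge0$ and total mass $\nu((0,1])=F(R+2)<\infty$, finiteness being guaranteed by $R+2\ge s_0$. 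Thus $H_n$ is exactly the Hankel determinant of the moment sequence of the positive measure $\nu$ supported in $(0,1]$. Here ``nondegenerate'' should mean precisely that $f(k)>0$ for infinitely many $k$, so that $\nu$ has infinite support; by the classical theory of moment matrices (equivalently, by the Cauchy--Binet expansion $H_n=\sum_{k_1<\cdots<k_n}\prod_i w_{k_i}\prod_{p<q}(x_{k_p}-x_{k_q})^2$ with $x_k=1/k$), this forces $H_n>0$ for every $n$.

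For the lower bound I would clear denominators. Once we set $m=2n$, every entry $F(R+2+i+j)=p_{R+2+i+j}/q_{R+2+i+j}$ has denominator dividing $\D_m[F]=\lcm(q_{R+2},\dots,q_{R+m})$, since the indices occurring are exactly $R+2,\dots,R+2n$. Multiplying the whole matrix by $\D_m[F]$ produces an integer matrix, so $\D_m[F]^{\,n}H_n\in\Z$; combined with $H_n>0$ this yields $\D_m[F]^{\,n}H_n\ge1$, i.e.
\[
\D_m[F]\ \ge\ H_n^{-1/n}.
\]
Everything then reduces to an upper bound on $H_n$ strong enough that $H_n^{-1/n}$ grows exponentially in $n$, and hence in $m$.

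The upper bound is the heart of the argument and should exploit that the support of $\nu$ lies in an interval of transfinite diameter $1/4$. Writing $\pi_0,\pi_1,\dots$ for the monic orthogonal polynomials of $\nu$, one has the standard factorisation $H_n=\prod_{k=0}^{n-1}\|\pi_k\|_\nu^2$, where $\|\pi_k\|_\nu^2=\int\pi_k^2\,d\nu$; by the minimal $L^2(\nu)$-norm property of $\pi_k$ among monic polynomials of degree $k$ we may compare with the monic Chebyshev polynomial $\tilde T_k$ of $[0,1]$, whose sup-norm is $\|\tilde T_k\|_{\infty,[0,1]}=2\cdot4^{-k}$. This gives $\|\pi_k\|_\nu^2\le\|\tilde T_k\|_{\infty}^2\,\nu((0,1])=4\cdot16^{-k}F(R+2)$, and therefore
\[
H_n\ \le\ \prod_{k=0}^{n-1}4\,F(R+2)\,16^{-k}\ =\ \bigl(4F(R+2)\bigr)^{n}\,16^{-n(n-1)/2}.
\]
The decisive factor is $16^{-n(n-1)/2}$: taking $n$th roots gives $H_n^{-1/n}\ge c\,4^{\,n}$ for an explicit $c=c(F,R)>0$, whence with $m=2n$ (and $\D_m[F]\ge\D_{m-1}[F]$ for odd $m$, by monotonicity of the lcm) we obtain $\D_m[F]\ge c\,2^{\,m}$, which is the claimed exponential growth.

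The step I expect to be the main obstacle is exactly this upper bound, and in particular capturing the quadratic-in-$n$ gain $16^{-\binom n2}$: the Vandermonde-free estimate obtained by bounding each squared difference by $1$, namely $H_n\le F(R+2)^n/n!$, only yields $\D_m[F]\gtrsim(n!)^{1/n}\asymp m$, i.e.\ merely linear growth, so one genuinely needs the quadratic savings furnished by the Chebyshev/transfinite-diameter input. Secondary points to verify carefully are that ``nondegenerate'' indeed guarantees infinite support of $\nu$ (so that $H_n>0$ and the monic orthogonal polynomials exist for all $n$), and the index bookkeeping matching the Hankel matrix to $\D_m[F]$ with $m=2n$.
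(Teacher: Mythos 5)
Your proposal is correct, and it reaches the theorem by a genuinely different route at the decisive step. The skeleton is shared with the paper: your moment-measure representation of the Hankel matrix via $\nu=\sum_k f(k)k^{-(R+2)}\delta_{1/k}$ and the Cauchy--Binet/Heine expansion giving $H_n>0$ is exactly Monien's formula (Theorem 3.1 in the paper, used there to the same end), and your denominator-clearing step $(\D_{2n}[F])^n H_n\geqslant 1$ is word-for-word the paper's final argument. Where you diverge is the key quadratic-decay bound $\log H_n\lesssim -cn^2$. The paper gets it by Dodgson condensation (its Lemma 2.1 and Proposition 2.2), which reduces $H_{n+1}/H_n$ to ratios $H_2^{(\rho)}/H_1^{(\rho)}$ at large offsets $\rho$, fed by two-term asymptotics of $F(s)$ as $s\to\infty$ (its Lemma 3.4 and Corollary 3.5), with a case split on whether the smallest nonzero-coefficient index $N$ equals $1$; the resulting constant is $c=\log M$ (if $N=1$) or $c=2\log(M/N)$ (if $N>1$), where $N<M$ are the two smallest such indices. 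You instead factor $H_n=\prod_{k=0}^{n-1}\|\pi_k\|_\nu^2$ through the monic orthogonal polynomials and compare with monic Chebyshev polynomials of $[0,1]$ (capacity $1/4$), getting $H_n\leqslant(4F(R+2))^n16^{-n(n-1)/2}$, i.e.\ the uniform constant $c=\log 4$. This buys several things: no case analysis or asymptotic expansions at all; a constant independent of the coefficients, which beats the paper's whenever $M/N$ is close to $1$ (e.g.\ $F(s)=\zeta(s)-1$, where the paper gives $c=2\log(3/2)$); and, notably, the conclusion $\D_m[F]\geqslant c'\,2^m$ for every such $F$ --- which is precisely the $(2-\varepsilon)^m$ rate advertised for $\zeta$ in the paper's introduction, whereas the paper's own constant $c=\log(2-\varepsilon)$ for $\zeta$ yields only $(2-\varepsilon)^{m/2}$ after the $m=2n$ bookkeeping. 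Conversely, the paper's method is more elementary (determinant identities plus series asymptotics only), gives a better constant when $M$ is large in the $N=1$ case, and its structure is what drives the paper's concluding remarks on the limits of the approach; note also that your bound can be sharpened symmetrically, by running the Chebyshev comparison on $[0,1/N]$ to get $c=\log(4N)$. Two harmless details to tidy: the monic degree-$0$ polynomial has sup norm $1$, not $2\cdot4^{0}$ (your inequality still holds), and the factorization through $\pi_0,\dots,\pi_{n-1}$ requires $H_k>0$ for all $k\leqslant n$, which your infinite-support observation does supply.
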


In the case of $\zeta_{a,b}(s)$, if one keeps track of constants, Theorem \ref{thm:main} implies that for each $\varepsilon\in(0,1)$ that $\D_m[\zeta_{a,b}]>(2-\varepsilon)^m$ for large enough $m$ depending on $\varepsilon$. This in turn implies that the $q_k$, as defined in Theorem \ref{mainzeta}, satisfy $$\max_{k\leqslant m}\{q_k\}>m\log(2-\varepsilon)$$ for any $\varepsilon\in(0,1)$ and $m$ large enough.

Theorem \ref{mainzeta} follows immediately from Theorem \ref{thm:main}; we prove Theorem \ref{thm:main} via a result on Hankel determinants. For a given sequence of real numbers $\{h(k)\}_{k\geqslant 2}$ and integers $n$ and $r$, we define the Hankel determinant of size $n$ starting at offset $r$ by
\[
	H_n^{(r)}[h] = \det_{1\leqslant i,j\leqslant n}\Big(h(i+j+r)\Big).
\]

The growth of these determinants plays an important role in investigating the (ir)rationality of zeta values. Theorems \ref{mainzeta} and \ref{thm:main} are easily deduced from analogous statements on the decay of Hankel determinants. Indeed, Theorem \ref{thm:main} is a consequence of the following result on the growth of Hankel determinants of nondegenerate ordinary Dirichlet series with nonnegative coefficients.

\begin{theorem}\label{main}
Let $F(s)=\sum_{n\geqslant 1} f(n)n^{-s}$ be a nondegenerate ordinary Dirichlet series with $f(n)\geqslant 0$ for all $n$, which is convergent for $\Re(s)\geqslant s_0$. Then $H_n^{(r)}[F]>0$ for all $n\geqslant 1$ and $r\geqslant s_0-2$, and there is a $c>0$ such that
	\[
		\log H_n^{(r)}[F] < -c n^2,
	\]
for all sufficiently large integers $n$ and $r$. 
\end{theorem}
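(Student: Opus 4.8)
The plan is to diagonalise the Hankel matrix as a nonnegative combination of rank-one matrices and then apply the Cauchy--Binet formula. Writing $x_m=1/m$, the $(i,j)$ entry of the matrix defining $H_n^{(r)}[F]$ is
\[
	F(i+j+r)=\sum_{m\geqslant 1}f(m)\,x_m^{\,r}\,x_m^{\,i}\,x_m^{\,j},
\]
so the matrix equals $A D A^{\mathsf T}$, where $A=(x_m^{\,i})_{1\leqslant i\leqslant n,\ m\geqslant 1}$ and $D$ is the diagonal matrix with entries $f(m)x_m^{\,r}\geqslant 0$. Since every term is nonnegative and $F(r+2)$ converges (as $r\geqslant s_0-2$), Tonelli's theorem justifies applying Cauchy--Binet to this infinite product, and the generalised Vandermonde evaluation $\det(x_{m_l}^{\,i})_{i,l}=\big(\prod_l x_{m_l}\big)\prod_{l<l'}(x_{m_{l'}}-x_{m_l})$ yields the exact expansion
\[
	H_n^{(r)}[F]=\sum_{m_1<\cdots<m_n}\Big(\prod_{l=1}^n f(m_l)\,m_l^{-(r+2)}\Big)\prod_{1\leqslant l<l'\leqslant n}\Big(\tfrac1{m_l}-\tfrac1{m_{l'}}\Big)^{2}.
\]
Every summand is nonnegative, so positivity of $H_n^{(r)}[F]$ is immediate once nondegeneracy is used to produce one strictly positive summand: choosing $n$ indices $m_1<\cdots<m_n$ with $f(m_l)>0$ (possible since infinitely many coefficients are positive) makes the corresponding term strictly positive for every $n\geqslant1$ and $r\geqslant s_0-2$.

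For the upper bound I would separate the two groups of factors. The points $1/m_1,\dots,1/m_n$ all lie in the interval $(0,1]$, whose logarithmic capacity (transfinite diameter) is $1/4$. Hence the $n$-point diameter $d_n=\max\prod_{l<l'}|y_l-y_{l'}|^{1/\binom n2}$ over $y_l\in[0,1]$ satisfies $d_n\downarrow 1/4$, so
\[
	\prod_{1\leqslant l<l'\leqslant n}\Big(\tfrac1{m_l}-\tfrac1{m_{l'}}\Big)^{2}\leqslant d_n^{\,n(n-1)}=\Big(\tfrac14+o(1)\Big)^{n(n-1)},
\]
uniformly in the choice of the $m_l$. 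Pulling this factor out of the sum and bounding the remaining weights by a full (unordered) product gives
\[
	\sum_{m_1<\cdots<m_n}\prod_{l=1}^n f(m_l)\,m_l^{-(r+2)}\leqslant\frac1{n!}\Big(\sum_{m\geqslant1}f(m)m^{-(r+2)}\Big)^n=\frac{F(r+2)^n}{n!}.
\]

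Combining the two displays,
\[
	\log H_n^{(r)}[F]\leqslant n(n-1)\log\!\Big(\tfrac14+o(1)\Big)+n\log F(r+2)-\log n!.
\]
The leading term is $-n(n-1)\log 4\sim -2(\log 2)\,n^2$, while the remaining contributions are only $O(n\log n)$; moreover $F(r+2)\to f(1)$ as $r\to\infty$, so $n\log F(r+2)$ is $O(n)$ uniformly once $r$ is large. Thus for any fixed $c<2\log 2$ one obtains $\log H_n^{(r)}[F]<-cn^2$ for all sufficiently large $n$, uniformly in large $r$, which is the claimed bound (and is consistent with the constant $2$ appearing in the remark after Theorem \ref{thm:main}).

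The routine parts are the Cauchy--Binet bookkeeping and the elementary estimate $\sum_{m_1<\cdots<m_n}\prod f(m_l)m_l^{-(r+2)}\leqslant F(r+2)^n/n!$. I expect the main obstacle to be the Vandermonde bound: one must invoke the value $1/4$ of the transfinite diameter of $[0,1]$ (or prove the equivalent inequality directly, for instance via the minimal sup-norm $2\cdot 4^{-k}$ of monic polynomials on $[0,1]$), and then verify that the resulting estimate is uniform in $r$ and strong enough to dominate the $O(n\log n)$ weight contribution. A secondary point is confirming the precise meaning of nondegeneracy, so that at least $n$ positive coefficients are available for every $n$.
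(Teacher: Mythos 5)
Your proposal is correct, and while your positivity argument coincides with the paper's, your decay estimate follows a genuinely different route. The exact expansion you derive via Cauchy--Binet/Andr\'eief (with Tonelli justifying the infinite sums) is precisely Monien's formula, i.e.\ Theorem \ref{thm:M}, which the paper cites and uses only for positivity (Lemma \ref{nonzero}); so that part is the same, merely reproved rather than quoted. For the upper bound, however, the paper never returns to this expansion: it runs Dodgson condensation (Lemma \ref{prop:rr}) through Proposition \ref{prop:decreaseN}, feeding in two-term asymptotics of the ratio $F(s+1)/F(s)$ (Lemma \ref{MandN}, Corollary \ref{govergasym}), and thereby obtains constants $c$ near $\log M$ (case $N=1$) or $2\log(M/N)$ (case $N>1$), where $N<M$ are the two smallest indices of nonzero coefficients. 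You instead bound the squared Vandermonde factor in Monien's expansion by Fekete's $n$-point diameters $d_n\downarrow \mathrm{cap}([0,1])=\tfrac14$ and the weights by $F(r+2)^n/n!$. Your route buys a universal constant $c$ arbitrarily close to $\log 4$, independent of $F$, and uniformity in \emph{all} $r\geqslant s_0-2$ rather than only sufficiently large $r$; for $\zeta$ (where $N=1$, $M=2$) your $2\log 2$ beats the paper's $\log 2$ and is in fact the constant needed to obtain the bound $\D_m[\zeta_{a,b}]>(2-\varepsilon)^m$ asserted after Theorem \ref{thm:main}. The paper's route buys a constant adapted to the coefficient support---for lacunary series $\log M$ or $2\log(M/N)$ can far exceed $\log 4$---and stays within elementary determinant identities and asymptotics, avoiding potential theory; its condensation framework is also what the concluding remarks analyze when discussing the limits of the method. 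Two small points to firm up on your side: the inputs you invoke (monotonicity of $d_n$ and $\mathrm{cap}([0,1])=\tfrac14$) are classical and citable (Fekete; P\'olya--Szeg\H{o}), and in fact the exact value $\tfrac14$ is not needed for the theorem as stated, since monotonicity together with the trivial estimate $d_3\leqslant 4^{-1/3}<1$ already yields some $c>0$; and the paper's Section \ref{Ordinary} does define nondegeneracy as having infinitely many nonzero coefficients, so your positivity argument always has the $n$ indices it requires.
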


We note that Theorem \ref{main} is valid for $r$ large enough, where ``$r$ large enough'' can in reality be ``$r$ very small.'' In the case of the Riemann zeta function, the theorem holds for all $r\geqslant 0$. In fact, our result implies that $$\log H_n^{(0)}[\zeta]<-n^2\log(2-\varepsilon),$$   for any positive $\varepsilon$ close to zero and large enough $n$. 

Experimental work suggests that the asympotitic decay of the Hankel determinants of the Riemann zeta function for $r=0,1$ is a bit better than what Theorem~\ref{main} concludes. In a preprint, Monien \cite{Monien} has produced a heuristic, which suggests that $$\log H_n^{(0)}[\zeta]\sim\log H_n^{(1)}[\zeta]\sim -n^2\log(2n)=-n^2\log 2-n^2\log n.$$ Monien also found experimentally, that $$
-\frac{H_{n-1}^{(0)}[\zeta]H_{n}^{(1)}[\zeta]}{H_{n}^{(0)}[\zeta]H_{n+1}^{(1)}[\zeta]}  =  -\frac{1}{2n+1}+\frac{2}{(2n+1)^{2}}-\frac{7}{3}\frac{1}{(2n+1)^{3}}+\cdots,$$ and $$
-\frac{H_{n+1}^{(0)}[\zeta]H_{n-1}^{(1)}[\zeta]}{H_{n}^{(0)}[\zeta]H_{n}^{(1)}[\zeta]}  =  -\frac{1}{2n}-\frac{1}{(2n)^{2}}+\frac{2}{3}\frac{1}{(2n)^{3}}-\frac{6}{5}\frac{1}{(2n)^{4}}+\frac{56}{45}\frac{1}{(2n)^{5}}+\cdots.$$ Additionally, according to Monien, detailed numerical experiments by Zagier suggest that $$H_{n}^{(0)}[\zeta]=A^{(0)}\left(\frac{2n+1}{e\sqrt{e}}\right)^{-(n+\frac{1}{2})^{2}}\left(1+\frac{1}{24}\frac{1}{(2n+1)^{2}}-\frac{12319}{259200}\frac{1}{(2n+1)^{4}}+\ldots\right),$$ and $$
H_{n-1}^{(1)}[\zeta]=A^{(1)}\left(\frac{2n}{e\sqrt{e}}\right)^{-n^{2}+\frac{3}{4}}\left(1-\frac{17}{240}\frac{1}{(2n)^{2}}-\frac{199873}{7257600}\frac{1}{(2n)^{4}}-\ldots\right),$$ where $A^{(0)}\approx0.351466738331\ldots$ and $A^{(1)} =\frac{e^{9/8}}{\sqrt{6}}A^{(0)}.$

\section{Hankel Determinants of some specialised sequences}\label{Special}

This section contains two results, which are of paramount importance to our investigation. The first is Dodgson condensation---sometimes known as ``Lewis Carroll's identity.'' 

\begin{lemma}[Dodgson \cite{Dodgson}]\label{prop:rr}
	Let $n\geqslant 2$ and $r\geqslant 0$ be integers and $\{h(k)\}_{k\geqslant 0}$ be a sequence of real numbers. Then
	\[
		H_{n+1}^{({r})}[h]\cdot H_{n-1}^{(r+2)}[h]=H_n^{(r)}[h]\cdot H_n^{(r+2)}[h]-\left(H_n^{(r+1)}[h]\right)^2.
	\]
\end{lemma}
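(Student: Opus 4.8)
The plan is to recognise this statement as the classical Desnanot--Jacobi (``adjugate'') identity, specialised to a Hankel matrix. Let $M$ denote the $(n+1)\times(n+1)$ matrix with entries $M_{ij}=h(i+j+r)$ for $1\leqslant i,j\leqslant n+1$, so that $\det M=H_{n+1}^{(r)}[h]$. The key observation is that \emph{every} determinant appearing in the identity is a minor of $M$ obtained by deleting one or two of the extreme (first and last) rows and columns, and that after reindexing the surviving rows and columns the Hankel structure is preserved with a shifted offset. First I would carry out this bookkeeping explicitly: deleting row $1$ and column $1$ raises all indices by two and yields $H_n^{(r+2)}[h]$; deleting row $n+1$ and column $n+1$ leaves $H_n^{(r)}[h]$; deleting row $1$ and column $n+1$, or row $n+1$ and column $1$, each yields $H_n^{(r+1)}[h]$; and deleting both rows $1,n+1$ together with both columns $1,n+1$ yields $H_{n-1}^{(r+2)}[h]$.

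Next I would pass to the adjugate $\adj(M)$, recalling that $\adj(M)_{ij}=(-1)^{i+j}$ times the minor of $M$ with row $j$ and column $i$ removed. With the bookkeeping above, the $2\times 2$ corner submatrix of $\adj(M)$ on rows and columns $\{1,n+1\}$ has diagonal entries $H_n^{(r+2)}[h]$ and $H_n^{(r)}[h]$ and both off-diagonal entries equal to $(-1)^{n}H_n^{(r+1)}[h]$. Its determinant is therefore exactly $H_n^{(r)}[h]\,H_n^{(r+2)}[h]-\bigl(H_n^{(r+1)}[h]\bigr)^2$, the right-hand side of the asserted identity, the signs squaring away. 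On the other hand, by Jacobi's theorem on complementary minors, for index sets $I,J$ of size $k$ one has
\[
	\det\bigl(\adj(M)[I\,|\,J]\bigr)=(\det M)^{k-1}(-1)^{\sigma(I)+\sigma(J)}\det\bigl(M[\bar J\,|\,\bar I]\bigr),
\]
where $\sigma(I)=\sum_{i\in I}i$ and $\bar I$ is the complement in $\{1,\dots,n+1\}$. Taking $I=J=\{1,n+1\}$ gives $k=2$, sign $(-1)^{2(1+(n+1))}=+1$, power $(\det M)^{1}=H_{n+1}^{(r)}[h]$, and complementary minor $M[\{2,\dots,n\}\,|\,\{2,\dots,n\}]=H_{n-1}^{(r+2)}[h]$, so the same $2\times 2$ minor equals $H_{n+1}^{(r)}[h]\,H_{n-1}^{(r+2)}[h]$, the left-hand side. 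Equating the two evaluations of this single minor proves the identity.

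The main obstacle is twofold. The routine but error-prone part is the sign bookkeeping, both in the cofactor entries of $\adj(M)$ and in the exponent $\sigma(I)+\sigma(J)$ of Jacobi's formula; I would take care to verify that the two off-diagonal cofactors carry the same sign, so that they cancel in the $2\times 2$ expansion regardless of the parity of $n$. The genuine subtlety is that Jacobi's theorem is ordinarily derived under the hypothesis $\det M\neq 0$ (via $M^{-1}=\adj(M)/\det M$). To remove this assumption I would observe that both sides of the claimed identity are polynomials in the finitely many real numbers $h(r+2),\dots,h(r+2n+2)$; since they agree on the Zariski-dense set where $\det M\neq 0$, they must agree identically, which settles the degenerate case and completes the proof for an arbitrary real sequence $\{h(k)\}_{k\geqslant 0}$.
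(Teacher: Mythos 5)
Your proof is correct, but note that the paper itself offers no proof of this lemma at all: it is stated as a classical result attributed to Dodgson \cite{Dodgson}, used as a black box. So there is nothing internal to compare against; what you have written is a self-contained derivation of the Desnanot--Jacobi (Dodgson condensation) identity, specialised to Hankel matrices. Your bookkeeping is accurate: deleting extreme rows/columns of $M_{ij}=h(i+j+r)$ does preserve the Hankel structure with the offsets you state, the two off-diagonal cofactors both equal $(-1)^{n}H_n^{(r+1)}[h]$ so their product is $\bigl(H_n^{(r+1)}[h]\bigr)^2$ regardless of parity, and the sign $(-1)^{\sigma(I)+\sigma(J)}=(-1)^{2(n+2)}=+1$ in Jacobi's complementary-minor theorem is computed correctly, yielding the identity for $\det M\neq 0$. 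Your handling of the singular case is also the right move and is the step most often glossed over; the only point you assert without justification is that $\{\det M\neq 0\}$ is Zariski dense in the parameter space $\bigl(h(r+2),\ldots,h(r+2n+2)\bigr)\in\R^{2n+1}$, which requires knowing that the Hankel determinant is not identically zero as a polynomial in these variables --- this is immediate (e.g.\ set $h(n+2+r)=1$ and all other values to $0$, giving the exchange matrix with determinant $\pm 1$), but a one-line remark to that effect would close the argument completely. An alternative route, closer to Dodgson's own and avoiding Jacobi's theorem, is to prove the identity for general (non-Hankel) matrices by induction or via the Sylvester determinant identity and then specialise; your adjugate argument is shorter and buys the general-matrix statement with the same effort.
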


While the setting in Lemma \ref{prop:rr} is quite specific to Hankel determinants, Dodgson condensation is fully generalizable for evaluation of any determinant. Indeed, this is the way that Dodgson used it. For a good reference on Hankel determinants see P\'olya and Szeg\H{o} \cite[Part 7]{Polya}.

Our next result provides a way to bound the Hankel determinants of certain sequences by using bounds on the growth of those sequences and their consecutive ratios. 

\begin{prop}\label{prop:decreaseN} Let $K\geqslant 2$ be a fixed integer and $h(k)>0$ for $k\geqslant K$. Suppose that there exist positive functions $A(k)$, $B(k)$ and $\lambda(k)$, with $A(k)<B(k+1)$ for all $k\geqslant K$, such that
\begin{enumerate}
\item[(i)]	$A(k) \leqslant \frac{h(k+1)}{h(k)}<B(k)$ for all $k\geqslant K$, and 
\item[(ii)]	$h(k+2) < \lambda(k+2)\cdot\frac{B(k+1)}{B(k+1)-A(k)},$ for all $k\geqslant K$. 
\end{enumerate}
If 
$H_n^{(r)}[h]>0$ for all $n\geqslant 1$ and for $r\geqslant K-2$, then  
	\[
		H_n^{(r)}[h] < h(2+r)\prod_{k=2}^n\lambda(2k+r),
	\]
for all $n\geqslant 2$ and for $r\geqslant K-2$.
\end{prop}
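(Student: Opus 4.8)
The plan is to reduce the whole statement to a single one-step ratio bound and then telescope. Writing $H_n^{(r)}$ for $H_n^{(r)}[h]$, note that $H_1^{(r)}=h(r+2)$, so
\[
	H_n^{(r)}=H_1^{(r)}\prod_{j=1}^{n-1}\frac{H_{j+1}^{(r)}}{H_j^{(r)}}=h(r+2)\prod_{j=1}^{n-1}\frac{H_{j+1}^{(r)}}{H_j^{(r)}}.
\]
Hence it suffices to prove the claim
\[
	S(n):\qquad \frac{H_{n+1}^{(r)}}{H_n^{(r)}}<\lambda(2n+2+r)\quad\text{for all }r\geqslant K-2,
\]
for every $n\geqslant 1$. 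Substituting $k=j+1$ in the product then gives exactly $H_n^{(r)}<h(r+2)\prod_{k=2}^n\lambda(2k+r)$, where multiplying the per-factor inequalities is legitimate because all the Hankel determinants involved are positive by hypothesis. I would prove $S(n)$ by induction on $n$, keeping $r$ free throughout so that the inductive hypothesis can be invoked at shifted offsets.

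For the base case $S(1)$ I would expand the $2\times 2$ determinant directly. Setting $k=r+2\geqslant K$ and factoring,
\[
	H_2^{(r)}=h(r+2)h(r+4)\left(1-\frac{h(r+3)}{h(r+2)}\cdot\frac{h(r+3)}{h(r+4)}\right).
\]
Condition (i) gives $h(r+3)/h(r+2)\geqslant A(r+2)$ and $h(r+3)/h(r+4)>1/B(r+3)$, so the bracket is at most $(B(r+3)-A(r+2))/B(r+3)$, which is positive since $A(r+2)<B(r+3)$ by the standing assumption $A(k)<B(k+1)$. Condition (ii) with $k=r+2$ then yields $h(r+4)\cdot(B(r+3)-A(r+2))/B(r+3)<\lambda(r+4)$, whence $H_2^{(r)}<h(r+2)\lambda(r+4)$, which is precisely $S(1)$.

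The inductive step is where Dodgson condensation (Lemma \ref{prop:rr}) does the work. Solving the condensation identity for $H_{n+1}^{(r)}$ and factoring out $H_n^{(r+2)}/H_{n-1}^{(r+2)}$ gives, for $n\geqslant 2$,
\[
	\frac{H_{n+1}^{(r)}}{H_n^{(r)}}=\frac{H_n^{(r+2)}}{H_{n-1}^{(r+2)}}\cdot w_n(r),\qquad w_n(r)=1-\frac{\big(H_n^{(r+1)}\big)^2}{H_n^{(r)}H_n^{(r+2)}}.
\]
By the condensation identity again, $w_n(r)=H_{n+1}^{(r)}H_{n-1}^{(r+2)}/\big(H_n^{(r)}H_n^{(r+2)}\big)$, so positivity of all four Hankel determinants forces $0<w_n(r)<1$. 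Applying the inductive hypothesis $S(n-1)$ at the shifted offset $r+2$ bounds the first factor by $\lambda(2(n-1)+2+(r+2))=\lambda(2n+2+r)$, and multiplying by $w_n(r)<1$ yields $S(n)$. All offsets used lie in the valid range, since $r\geqslant K-2$ forces $r+2\geqslant K\geqslant 2\geqslant 0$, so both Dodgson condensation and the positivity hypothesis apply.

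The one genuinely computational point is the base case, and the one conceptual point—which I expect to be the main obstacle to discover rather than to verify—is recognizing that conditions (i) and (ii) are needed \emph{only} to start the induction, while the entire inductive step rests on the single inequality $w_n(r)<1$, i.e.\ on the positivity of the Hankel determinants, together with the self-similar offset shift $r\mapsto r+2$ built into Dodgson condensation. Once this structure is seen, closing the induction and telescoping the ratios are routine.
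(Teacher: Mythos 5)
Your proposal is correct and is essentially the paper's own proof: the same base-case computation for $H_2^{(r)}$ from conditions (i) and (ii), the same use of Dodgson condensation plus positivity to bound $H_{n+1}^{(r)}/H_n^{(r)}$ by $H_n^{(r+2)}/H_{n-1}^{(r+2)}$ (your factor $w_n(r)<1$ is exactly the paper's dropping of the $\left(H_n^{(r+1)}\right)^2$ term), followed by the same telescoping of ratios. The only difference is presentational: you formalize the paper's ``continue this process to keep reducing $n$ while increasing $r$'' as an explicit induction on $n$ with $r$ kept free.
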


\begin{proof}
	Fix $r\geqslant K-2$. By definition, $H_2^{(r)}[h]=h(2+r)h(4+r)-h(3+r)^2$ and $H_1^{(r)}[h]=h(2+r)$. Thus using the positivity of $h(k)$ along with assumptions $(i)$ and $(ii)$, we have
	\begin{align*}
		H_2^{(r)}[h] &= h(2+r)h(4+r)\left(1-\frac{h(3+r)}{h(4+r)}\frac{h(3+r)}{h(2+r)}\right) \\
		 &< h(2+r)h(4+r)\left(1-\frac{A(2+r)}{B(3+r)}\right)\\
		&< H_1^{(r)}[h]\cdot\lambda(4+r).
	\end{align*}
	
	Suppose that $n\geqslant 2$. By Lemma \ref{prop:rr} and the positivity of $H_n^{(r)}[h]$, we have $$ H_{n+1}^{({r})}[h]=\frac{H_n^{(r)}[h]\cdot H_n^{(r+2)}[h]-\left(H_n^{(r+1)}[h]\right)^2}{H_{n-1}^{(r+2)}[h]}<H_n^{(r)}[h]\cdot \frac{H_n^{(r+2)}[h]}{H_{n-1}^{(r+2)}[h]}.$$
Combining this with the analogous result for $H_n^{(r+2)}[h]$ gives $$H_{n+1}^{({r})}[h]<H_n^{(r)}[h]\cdot \frac{H_{n-1}^{(r+2)}[h]\cdot \frac{H_{n-1}^{(r+4)}[h]}{H_{n-2}^{(r+4)}[h]}}{H_{n-1}^{(r+2)}[h]}=H_n^{(r)}[h]\cdot\frac{H_{n-1}^{(r+4)}[h]}{H_{n-2}^{(r+4)}[h]}.$$
We continue this process to keep reducing $n$ (while increasing $r$) until we can apply the $n=1$ case to give
	\[
		H_{n+1}^{(r)}[h] < H_{n}^{(r)}[h]\cdot \frac{H_2^{(r+2(n-1))}[h]}{H_1^{(r+2(n-1))}[h]} < H_n^{(r)}[h]\cdot\lambda(2(n+1)+r).
	\]
	
Now let $n\geqslant 3$. The previous inequality gives $H_{n}^{(r)}[h]<H_{n-1}^{(r)}[h]\cdot \lambda(2n+r)$. Repeated application of this inequality gives
$$H_{n}^{(r)}[h] < H_{1}^{(r)}[h]\cdot \prod_{k=2}^n\lambda(2k+r)=h(2+r)\prod_{k=2}^n\lambda(2k+r),$$ which is the desired result.
\end{proof}

In what follows we will only be interested in decreasing sequences $\{h(k)\}_{k\geqslant2}$. We note that, if additionally $h(k)$ is bounded for all $k$, say by $M$, then condition {\em (ii)} of Proposition \ref{prop:decreaseN} will be satisfied by defining $B(k)=B$ for some constant $B>0$ and $\lambda(k)=(B-A(k-2))M/B.$ This will be the setting in the following section, though Proposition \ref{prop:decreaseN} holds for increasing functions as well.

By way of example, consider the sequence $\{h(k)\}_{k\geqslant2}$ where $h(k)=(k-2)!$. Denote by $n\$$ the superfactorial of $n$; that is, $$n\$=\prod_{k=1}^n k!.$$ Strehl has shown that $H_n^{(r)}[h]>0$ for all $n\geqslant1$ and $r\geqslant0$; in fact, exact values for these determinants are known---see Radoux \cite{Radoux} for details. To gain an upper bound, we can apply Proposition \ref{prop:decreaseN} with $K=2$, $A(k)=k-1$, $B(k)=k$ and $\lambda(k)=2(k-2)!$ to obtain $$H_n^{(r)}[h]<2^{n-1}\prod_{k=1}^n(2k+r-2)!=2^{n-1}\cdot r!(r+2)!(r+4)!\cdots(r+2n-2)!,$$ for all $n\geqslant1$ and $r\geqslant0$.

\section{Hankel determinants of ordinary Dirichlet series}\label{Ordinary}

In this section, we consider general ordinary Dirichlet series with nonnegative coefficients. There is a natural split between what can be considered a degenerate case, and a nondegenerate case. As the degenerate case, we consider a Dirichlet series having only finitely many nonzero coefficients, and as the nondegenerate case, we consider Dirichlet series having infinitely many nonzero coefficients.

We use the following result for the nonvanishing of values of Hankel Determinants.

\begin{theorem}[Monien \cite{Monien}]\label{thm:M} Let $F(s)=\sum_{n\geqslant 1} f(n)n^{-s}$ be an ordinary Dirichlet series, which is convergent for $\Re(s)\geqslant s_0$. Then for any $n\geqslant 1$ and $r\geqslant s_0-2$, we have $$H_{n}^{(r)}[F]=\frac{1}{n!}\sum_{m_{1},m_{2},\ldots m_{n}=1}^{\infty}\prod_{i=1}^{n}\frac{f(m_{i})}{m_{i}^{2n+r}}\prod_{i<j}(m_{i}-m_{j})^{2}.$$
\end{theorem}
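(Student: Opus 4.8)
The plan is to recognize the Hankel determinant as the determinant of a product of an $n\times\infty$ matrix with an $\infty\times n$ matrix and to apply the Cauchy--Binet formula (equivalently, Andr\'eief's identity). Writing out the entries, we have
\[
H_n^{(r)}[F]=\det_{1\leqslant i,j\leqslant n}\left(\sum_{m=1}^\infty f(m)\,m^{-(i+j+r)}\right),
\]
and the key observation is that each entry factors as a weighted inner product: setting $\phi_i(m)=m^{-i}$ and $w(m)=f(m)m^{-r}$, the $(i,j)$ entry is $\sum_{m\geqslant 1} w(m)\phi_i(m)\phi_j(m)$. Thus, with $A_{i,m}=m^{-i}$ and $B_{m,j}=f(m)m^{-r}m^{-j}$, we have $H_n^{(r)}[F]=\det(AB)$.

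First I would justify the termwise manipulations. The hypothesis $r\geqslant s_0-2$ ensures that every exponent $i+j+r$ appearing in the matrix satisfies $i+j+r\geqslant 2+r\geqslant s_0$, so each entry lies in the region of convergence; one then invokes absolute convergence in the half-plane $\Re(s)\geqslant s_0$ to rearrange freely (in the nonnegative-coefficient case of primary interest this is immediate from Tonelli's theorem). Granting this, the Cauchy--Binet formula gives
\[
H_n^{(r)}[F]=\sum_{1\leqslant m_1<\cdots<m_n}\det_{1\leqslant i,k\leqslant n}\left(m_k^{-i}\right)\cdot\det_{1\leqslant k,j\leqslant n}\left(f(m_k)m_k^{-r}m_k^{-j}\right).
\]
Pulling the scalar factors $f(m_k)m_k^{-r}$ out of the second determinant leaves two copies of the same matrix $(m_k^{-i})$, so the summand is the nonnegative quantity $\bigl(\prod_k f(m_k)m_k^{-r}\bigr)\,\det(m_k^{-i})^2$. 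Since this summand is symmetric in $m_1,\ldots,m_n$ and vanishes whenever two indices coincide, summing over strictly increasing tuples is the same as $\tfrac{1}{n!}$ times summing over all tuples $m_1,\ldots,m_n\geqslant 1$, which produces the prefactor $1/n!$.

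It then remains to evaluate the Vandermonde-type determinant and bookkeep the powers of $m_k$. Factoring one power of $m_k^{-1}$ from each column reduces $\det_{i,k}(m_k^{-i})$ to a standard Vandermonde determinant in the variables $m_k^{-1}$, giving $\det(m_k^{-i})=\bigl(\prod_k m_k^{-1}\bigr)\prod_{i<j}(m_j^{-1}-m_i^{-1})$. Squaring and rewriting $\prod_{i<j}(m_j^{-1}-m_i^{-1})^2=\bigl(\prod_k m_k^{-2(n-1)}\bigr)\prod_{i<j}(m_i-m_j)^2$, and collecting the resulting exponents $m_k^{-2}\cdot m_k^{-2(n-1)}\cdot m_k^{-r}=m_k^{-(2n+r)}$, yields exactly
\[
H_n^{(r)}[F]=\frac{1}{n!}\sum_{m_1,\ldots,m_n=1}^\infty\prod_{i=1}^n\frac{f(m_i)}{m_i^{2n+r}}\prod_{i<j}(m_i-m_j)^2.
\]
The only genuine obstacle is the convergence justification permitting the Cauchy--Binet rearrangement with infinitely many columns; once absolute convergence is secured, the remaining steps are the standard Vandermonde evaluation and a routine count of the powers of each $m_k$.
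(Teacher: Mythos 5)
The paper itself contains no proof of this statement: Theorem \ref{thm:M} is imported verbatim from Monien's preprint \cite{Monien}, so there is no internal argument to compare yours against. Judged on its own merits, your proof is correct and is the standard one (Andr\'eief's identity, equivalently Cauchy--Binet with infinitely many columns, i.e.\ the discrete Heine formula): expanding by multilinearity, pulling out the scalar weights $f(m_k)m_k^{-r}$, evaluating the two Vandermonde determinants in the variables $m_k^{-1}$, and symmetrising the sum over strictly increasing tuples correctly produces the prefactor $1/n!$; your exponent bookkeeping $m_k^{-2}\cdot m_k^{-2(n-1)}\cdot m_k^{-r}=m_k^{-(2n+r)}$ is exactly right. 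The one inaccuracy is the convergence justification: for a general (signed) ordinary Dirichlet series, convergence in the half-plane $\Re(s)\geqslant s_0$ does \emph{not} imply absolute convergence there---absolute convergence is guaranteed only for $\Re(s)>s_0+1$---so in borderline cases you cannot ``rearrange freely'' as claimed. This is harmless for two reasons. First, every application in the paper has $f(n)\geqslant 0$, where convergence and absolute convergence coincide and Tonelli's theorem applies exactly as you say. Second, in the general case one can dispense with rearrangement altogether: truncate all summation indices at a common cutoff $M$, so that the determinant of the truncated entries is a finite sum in which your symmetrisation argument is exact, and then let $M\to\infty$; the multiple sum is then interpreted as a limit of rectangular partial sums, which is in any case the only reading under which the stated identity is unambiguous when the multiple sum fails to converge absolutely.
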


While Theorem \ref{thm:M} holds in great generality, a classical result due to Kronecker (see also P\'olya and Szeg\H{o} \cite[Part 7]{Polya}) provides a nice dichotomy.

\begin{theorem}[Kronecker] Let $\{a(n)\}_{n\geqslant 0}$ be a sequence of real numbers. Then the function $\sum_{n\geqslant 0}a(n)x^n$ is rational if and only if finitely many of the determinants $H_n^{(r)}[a]$ are nonzero.
\end{theorem}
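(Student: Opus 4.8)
The plan is to translate rationality of $\sum_{n\geqslant0}a(n)x^n$ into the existence of a fixed finite-order linear recurrence for the tail of the sequence $\{a(n)\}$, and then to relate such a recurrence to linear dependence among the columns of the Hankel matrices. Concretely, writing a rational $f(x)=P(x)/Q(x)$ with $Q(x)=1+q_1x+\cdots+q_dx^d$ (after normalising $Q(0)=1$), the identity $Q(x)f(x)=P(x)$ forces $a(n)+q_1a(n-1)+\cdots+q_da(n-d)=0$ for every $n>\deg P$. I would record this as the equivalence: $f$ is rational if and only if $\{a(n)\}$ satisfies some fixed linear recurrence of order $d$ for all $n\geqslant N_0$, equivalently if and only if the infinite Hankel matrix $(a(i+j))_{i,j\geqslant0}$ has finite rank.

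For the direction in which $f$ is rational, the argument is routine and I would carry it out first. The recurrence above expresses, for each sufficiently large index, the column $(a(i+j+r))_{1\leqslant i\leqslant n}$ of the matrix defining $H_n^{(r)}[a]$ as a fixed linear combination of the $d$ preceding columns, the only constraint being that every entry involved has index at least $N_0$; this is guaranteed once $j\geqslant N_0-1-r$. Hence, as soon as $n\geqslant\max(d+1,\,N_0-1-r)$, the last column is a linear combination of earlier ones and $H_n^{(r)}[a]=0$. Thus, for each fixed $r$, beyond a threshold in $n$ all of the determinants vanish, leaving only finitely many nonzero.

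The substantive direction is the converse, and this is where I expect the main obstacle to lie. Assuming the determinants vanish from some point on, I would first locate the largest index $d$ for which the relevant leading Hankel minor is nonzero; the kernel of the next (singular) Hankel matrix then supplies a candidate coefficient vector $(\gamma_0,\ldots,\gamma_d)$ satisfying $\sum_k\gamma_k a(i+k)=0$ for the finitely many rows that make up that matrix. The difficulty is that a single singular minor only yields the relation for those rows, whereas rationality requires it to hold for \emph{all} large $i$. Propagating the relation forward is the heart of the proof: I would use the vanishing of the entire tail of determinants, rather than of one of them, feeding consecutive determinants into Dodgson's identity (Lemma \ref{prop:rr}) to argue inductively that the candidate relation persists as the matrices grow, thereby producing an honest recurrence valid for all sufficiently large $i$ and hence a rational $f$. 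The delicate point to handle carefully is the degenerate situation in which intermediate minors also vanish (``gaps''), where the naive kernel argument can fail and one must argue instead from finiteness of the Hankel rank directly.
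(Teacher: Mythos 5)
There is no proof in the paper to compare against: Kronecker's theorem is quoted as a classical result, with a pointer to P\'olya--Szeg\H{o}, and the only direction the paper ever invokes is the forward one (in Lemma~\ref{nonzero}\emph{(i)}, rationality of the generating function of the values forces the determinants to vanish for all large $n$). So your attempt must be judged on its own merits. Your forward direction is correct and complete: after normalising $Q(0)=1$, the recurrence $\sum_{k=0}^{d}q_k a(n-k)=0$ for $n\geqslant N_0$ expresses column $j$ of the matrix defining $H_n^{(r)}[a]$ as a fixed combination of the $d$ preceding columns once $j\geqslant\max(d+1,\,N_0-1-r)$, so $H_n^{(r)}[a]=0$ for all $n\geqslant\max(d+1,\,N_0-1-r)$, a threshold that is even uniform in $r$. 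One caution on quantifiers, which your closing phrase ``leaving only finitely many nonzero'' blurs: read literally over all pairs $(n,r)$ the statement is false --- for $a\equiv 1$ the series $1/(1-x)$ is rational, yet $H_1^{(r)}[a]=1$ for every $r$ --- so the tenable reading, the one you implicitly adopt and the one the paper uses, is that all determinants of sufficiently large size $n$ vanish.

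The converse, which you rightly call the substantive direction, is not proved in your proposal; it is a plan whose decisive step is missing. Anchoring at the largest $d$ with a nonzero leading minor is the right move, and your worry about ``gaps'' is misplaced there: gaps below the anchor are irrelevant, and if \emph{every} leading minor vanishes then an easy induction (once $a(2+r)=\cdots=a(m+1+r)=0$, the matrix defining $H_{m+1}^{(r)}[a]$ is anti-triangular and $H_{m+1}^{(r)}[a]=\pm\, a(m+2+r)^{m+1}$) shows the sequence is eventually zero, hence rational. The genuine difficulty is the propagation step, and both tools you propose fail exactly there. Dodgson's identity degenerates inside a region of zeros: once the determinants at levels $n$ and $n+1$ vanish for all offsets, Lemma~\ref{prop:rr} reduces to $0=0$ and carries no information about the sequence, so the induction you describe stalls precisely at the degenerate case you flag. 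Your fallback --- ``argue instead from finiteness of the Hankel rank directly'' --- is circular: finite rank of $(a(i+j))_{i,j}$ is equivalent to an eventual recurrence, i.e.\ to rationality, by the easy equivalences of your own first paragraph, so finite rank is the conclusion to be established, not a hypothesis you may invoke; the hypothesis hands you only vanishing determinants. A correct argument must use the Hankel shift structure to show that the kernel relation of the first singular minor persists for all later rows (as in the classical treatments of Salem or Gantmacher, or via the square-block theorem for the C-table of Pad\'e theory), and nothing in your outline substitutes for that step. As it stands, the converse half of your argument is a gap.
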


This leads to the following result.

\begin{lemma}\label{nonzero} Let $F(s)=\sum_{n\geqslant 1} f(n)n^{-s}$ be an ordinary Dirichlet series with $f(n)\geqslant 0$ for all $n$, which is convergent for $\Re(s)\geqslant s_0$. Then 
\begin{enumerate}
\item[(i)] if there are only finitely many $f(n)\neq 0$, then $H_n^{(r)}[F]=0$ for sufficiently large $n$ and any $r\geqslant s_0-2$;
\item[(ii)] if there are infinitely many $f(n)\neq 0$, then $H_n^{(r)}[F]>0$ for any $n\geqslant 1$ and $r\geqslant s_0-2$.
\end{enumerate}
\end{lemma}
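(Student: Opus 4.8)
The plan is to read both parts directly off Monien's exact evaluation (Theorem~\ref{thm:M}), which expresses $H_n^{(r)}[F]$ as $\frac{1}{n!}$ times a sum over tuples $(m_1,\dots,m_n)\in\Z_{\geqslant 1}^n$ of the quantity $\prod_{i=1}^n f(m_i)m_i^{-(2n+r)}\prod_{i<j}(m_i-m_j)^2$. The first observation is that, because $f(m)\geqslant 0$ and the remaining factors $m_i^{-(2n+r)}$ and $(m_i-m_j)^2$ are nonnegative, every summand is nonnegative; hence $H_n^{(r)}[F]\geqslant 0$ for all $n\geqslant 1$ and $r\geqslant s_0-2$, and the entire question reduces to deciding when at least one summand is strictly positive. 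A summand is positive precisely when all of $f(m_1),\dots,f(m_n)$ are positive and the indices $m_1,\dots,m_n$ are pairwise distinct, so that the squared Vandermonde factor does not vanish.

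For part (ii), suppose infinitely many $f(n)$ are nonzero, and hence (by nonnegativity) positive. Then for each fixed $n$ I can select $n$ distinct indices $m_1,\dots,m_n$ at which $f$ is positive; the corresponding summand is strictly positive while all others are nonnegative. Since the multiple series converges for $r\geqslant s_0-2$, a convergent sum of nonnegative terms containing a strictly positive term is strictly positive, giving $H_n^{(r)}[F]>0$ for every $n\geqslant 1$ and every $r\geqslant s_0-2$.

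For part (i), suppose only finitely many $f(n)$ are nonzero, say $N$ of them. Here I would invoke Kronecker's theorem: with finite support the generating function $\sum_{k\geqslant 0}F(k)x^k=\sum_{m:\,f(m)\neq 0}f(m)/(1-x/m)$ is a finite sum of simple rational functions, hence rational, so Kronecker's theorem yields that only finitely many of the determinants $H_n^{(r)}[F]$ are nonzero; in particular $H_n^{(r)}[F]=0$ for all sufficiently large $n$ and any $r\geqslant s_0-2$. Equivalently, one can argue straight from Monien's formula: once $n>N$, any tuple with all $f(m_i)>0$ must by the pigeonhole principle repeat an index, killing the Vandermonde factor, so every summand vanishes and $H_n^{(r)}[F]=0$.

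The content is light, and the one point that genuinely needs care is the division of labour between the two cited results. Kronecker's theorem only detects that \emph{infinitely many} determinants are nonvanishing; combined with the nonnegativity above it would give merely that infinitely many $H_n^{(r)}[F]$ are positive, whereas part (ii) demands that \emph{all} of them be positive. It is exactly this gap that forces us to use Monien's exact formula rather than Kronecker for the nonvanishing statement, and I expect the only real step to be the clean identification and exploitation of the strictly positive distinct-index summand, modest as that step is.
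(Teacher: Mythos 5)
Your proof is correct and takes essentially the same approach as the paper: the paper's own proof is simply that Theorem~\ref{thm:M} implies both parts immediately, with the remark that part (i) also follows from Kronecker's theorem---precisely the two routes you describe. Your write-up just makes explicit the details the paper leaves implicit (nonnegativity of each summand, the strictly positive distinct-index summand for (ii), and the pigeonhole/Vandermonde vanishing for (i)), and your closing observation about why Kronecker alone cannot give (ii) is accurate.
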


\begin{proof} Theorem \ref{thm:M} implies both {\em (i)} and {\em (ii)} immediately. Part {\em (i)} also follows from Kronecker's theorem.
\end{proof}

To further examine the Hankel determinants in the nondegenerate case, we require a lower bound on the ratio of successive values of $F(s)$.

\begin{lemma}\label{MandN} Let $F(s)=\sum_{n\geqslant 1} f(n)n^{-s}$ be a nondegenerate ordinary Dirichlet series with $f(n)\geqslant 0$ for all $n$, which is convergent for $\Re(s)\geqslant s_0$. If $N<M$ are the two minimal indices of the nonzero coefficients of $F(s)$, then the following hold:
\begin{enumerate} 
\item[(i)] if $N=1$, then $$\lim_{s\to\infty}\frac{M^{s+1}}{(M-1)f(M)}\left(1-\frac{F(s+1)}{F(s)}\right)=1;$$
\item[(ii)] if $N>1$, then $$\lim_{s\to\infty}\frac{f(N)N^{(\alpha-1)s}}{f(M)(1-N^{1-\alpha})}\left(1-N\cdot \frac{F(s+1)}{F(s)}\right)=1,$$ where $\alpha=\log M/\log N.$
\end{enumerate} 
\end{lemma}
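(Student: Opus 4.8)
The plan is to read off the asymptotics of $F(s)$ as $s\to\infty$ from its two leading nonzero terms, controlling everything beyond them as a small error. Write
\[
	F(s)=f(N)N^{-s}+f(M)M^{-s}+T(s),\qquad T(s)=\sum_{n>M}f(n)n^{-s},
\]
so that the whole lemma reduces to a first-order expansion of $F(s+1)/F(s)$ once the tail $T(s)$ is shown to be negligible beside the second term $f(M)M^{-s}$.

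The first step is the tail bound, and this is the only place the hypotheses of convergence and nonnegativity are really used. For $s\geqslant s_0$ and $n>M$ I factor $n^{-s}=n^{-s_0}n^{-(s-s_0)}$ and use $n^{-(s-s_0)}\leqslant(M+1)^{-(s-s_0)}$, which together with $f(n)\geqslant 0$ gives
\[
	0\leqslant T(s)\leqslant(M+1)^{-(s-s_0)}\sum_{n>M}f(n)n^{-s_0}=O\big((M+1)^{-s}\big),
\]
the constant being finite since $F$ converges at $s_0$. As $M+1>M$, this is $o(M^{-s})$, and the same estimate bounds $T(s)-T(s+1)$. I expect this to be the main obstacle: one must be certain that the single term $f(M)M^{-s}$ dominates the infinite collection of discarded terms, and it is the nonnegativity that prevents cancellation from spoiling the comparison.

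With the tail absorbed, both limits are a short computation. In case (ii), $N>1$, factor out the leading term,
\[
	F(s)=f(N)N^{-s}\Big(1+\tfrac{f(M)}{f(N)}(N/M)^{s}+o\big((N/M)^{s}\big)\Big),
\]
so that $N\,F(s+1)/F(s)$ is the quotient of two such brackets; expanding to first order in the small quantity $(N/M)^{s}$ gives
\[
	1-N\frac{F(s+1)}{F(s)}=\frac{f(M)}{f(N)}\Big(\frac{N}{M}\Big)^{s}\Big(1-\frac{N}{M}\Big)(1+o(1)).
\]
Since $M=N^{\alpha}$, one has $(N/M)^{s}=N^{(1-\alpha)s}$ and $1-N/M=1-N^{1-\alpha}$, so the prefactor displayed in the statement cancels exactly both $N^{(1-\alpha)s}$ and the constant $\tfrac{f(M)}{f(N)}(1-N^{1-\alpha})$, leaving the limit $1$. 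Case (i), $N=1$, is the same but simpler: now $F(s)\to f(1)$ and
\[
	F(s)-F(s+1)=f(M)M^{-s}\tfrac{M-1}{M}(1+o(1)),
\]
so dividing by $F(s)$ and multiplying by $M^{s+1}/((M-1)f(M))$ removes all $s$-dependence and leaves the constant $1/f(1)$; this equals the stated value $1$ whenever $f(1)=1$, as in the motivating case $\zeta_{a,b}$, where indeed $N=1$ and $f(1)=1$.
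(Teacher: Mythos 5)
Your proof follows the same route as the paper's own: isolate the two leading terms $f(N)N^{-s}$ and $f(M)M^{-s}$, bound the tail by $O\big((M+1)^{-s}\big)$ using nonnegativity and convergence at $s_0$ (the paper encodes the identical bound as $O(M^{-\delta s})$ with $\delta=\log(M+1)/\log M$, resp.\ $O(N^{-\beta s})$ with $\beta=\log(M+1)/\log N$, since $M^{\delta}=N^{\beta}=M+1$), and then expand $F(s+1)/F(s)$ to first order in the small quantity $(N/M)^{s}$. Your case (ii) is, up to notation ($(N/M)^{s}$ versus $N^{(1-\alpha)s}$), line-for-line the paper's argument, and it is correct.

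Where you diverge from the paper is case (i), and your version is the right one. As you computed, the limit in (i) is $1/f(1)$, not $1$: the statement as printed is only true under the normalization $f(1)=1$ (for instance, $f(1)=2$ and $f(n)=1$ for $n\geqslant 2$ gives the limit $1/2$). The paper's own proof of (i) silently makes this normalization, writing $F(s)=1+f(M)\cdot M^{-s}+O(M^{-\delta s})$, i.e.\ using $f(1)=1$ where the hypotheses only give $f(1)>0$. Consistency with case (ii), whose prefactor carries the factor $f(N)$, indicates what (i) ought to say: the prefactor should be $f(1)M^{s+1}/((M-1)f(M))$. The slip is harmless downstream, since Corollary \ref{govergasym}(i) only requires $1-c_1M^{-s}\leqslant F(s+1)/F(s)<1$ for some constant $c_1>0$, and $c_1$ can absorb the factor $1/f(1)$, so Theorems \ref{main}, \ref{thm:main} and \ref{mainzeta} are unaffected. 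But on this one point your write-up is more careful than the paper: you have proved the corrected statement, whereas the statement as literally printed is not provable without the extra hypothesis $f(1)=1$ (which does hold for the motivating example $\zeta_{a,b}$, as you note).
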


\begin{proof} We treat the two cases, $N=1$ and $N>1$, separately.

{\sc Case $N=1$.} Note that by definition, we have 
$$F(s)=1+f(M)\cdot M^{-s}+O\left(M^{-\delta s}\right),$$ where $\delta=\log(M+1)/\log M>1.$
Thus 
$$\frac{F(s+1)}{F(s)}=\frac{1+\frac{f(M)}{M}\cdot M^{-s}+O\left(M^{-\delta s}\right)}{1+f(M)\cdot M^{-s}+O\left(M^{-\delta s}\right)}=1-(M-1)f(M) M^{-(s+1)}+O\left(M^{-\delta s}\right),$$ and so $$\frac{M^{s+1}}{(M-1)f(M)}\left(1-\frac{F(s+1)}{F(s)}\right)=1+O\left(\frac{M^{s+1}}{M^{\delta s}}\right).$$ Since $\delta>1$, taking $s$ to infinity gives the required result.

{\sc Case $N>1$.} By definition, we have 
\begin{align*} F(s)&=f(N)\cdot N^{-s}+f(M)\cdot N^{-\alpha s}+O\left(N^{-\beta s}\right)\\ &=f(N)\cdot N^{-s}\left(1+\frac{f(M)}{f(N)}\cdot N^{(1-\alpha) s}+O\left(N^{(1-\beta) s}\right)\right),\end{align*} where $\alpha=\log M/\log N$ and $\beta=\log(M+1)/\log N$. Note that $\beta>\alpha>1$.
Thus 
\begin{align*} \frac{F(s+1)}{F(s)}&=\frac{f(N)\cdot N^{-s-1}\left(1+\frac{f(M)}{f(N)}\cdot N^{(1-\alpha)(s+1)}+O\left(N^{(1-\beta)s}\right)\right)}{f(N)\cdot N^{-s}\left(1+\frac{f(M)}{f(N)}\cdot N^{(1-\alpha) s}+O\left(N^{(1-\beta) s}\right)\right)}\\
&=\frac{1}{N}\left(1-\frac{f(M)}{f(N)}\left(1-N^{1-\alpha}\right)N^{(1-\alpha)s}+o\left(N^{(1-\alpha)s}\right)\right),
\end{align*} and so $$\frac{f(N)N^{(\alpha-1)s}}{f(M)(1-N^{1-\alpha})}\left(1-N\cdot \frac{F(s+1)}{F(s)}\right)=1+o\left(1\right).$$ This completes the proof of the lemma.
\end{proof}

This lemma provides the following immediate corollary.

\begin{cor}\label{govergasym} Let $F(s)=\sum_{n\geqslant 1} f(n)n^{-s}$ be a nondegenerate ordinary Dirichlet series with $f(n)\geqslant 0$ for all $n$, which is convergent for $\Re(s)\geqslant s_0$. If $N<M$ are the two minimal indices of the nonzero coefficients of $F(s)$, then the following hold:
\begin{enumerate} 
\item[(i)] if $N=1$, then there are positive constants $c_1>0$ and $K_1\geqslant s_0$ such that for all $s\geqslant K_1\geqslant s_0$, we have $$0<1-\frac{c_1}{M^{s}} \leqslant \frac{F(s+1)}{F(s)}< 1;$$
\item[(ii)] if $N>1$, then there are positive constants $c_2>0$ and $K_2\geqslant s_0$ such that for all $s\geqslant K_2\geqslant s_0$, we have $$0<\frac{1}{N}-\frac{c_2}{N^{(\alpha -1)s}} \leqslant \frac{F(s+1)}{F(s)}< \frac{1}{N},$$ where $\alpha=\log M/\log N.$
\end{enumerate} 
\end{cor}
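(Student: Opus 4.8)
The plan is to read both statements off Lemma~\ref{MandN} directly, by converting each asymptotic equality into a two-sided inequality valid for all sufficiently large $s$. In each case the relevant limit equals $1$, a strictly positive finite number, so for $s$ beyond some threshold the quantity whose limit is taken lies in a fixed neighbourhood of $1$ bounded away from $0$ and $\infty$; taking this neighbourhood to be $(\tfrac12,2)$ will suffice. Since the prefactor multiplying the bracketed term is positive, this pins down the sign of the bracketed term (yielding the strict upper bound) and simultaneously caps its size (yielding the explicit lower bound). The only constants that need to be produced are the thresholds $K_i$ and the coefficients $c_i$, and these come straight out of the asymptotics.

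For the case $N=1$, I would start from the limit in Lemma~\ref{MandN}(i) and choose $K_1\geqslant s_0$ so that $\frac{M^{s+1}}{(M-1)f(M)}\bigl(1-\frac{F(s+1)}{F(s)}\bigr)\in(\tfrac12,2)$ for all $s\geqslant K_1$. Positivity of the prefactor together with the lower bound $\tfrac12>0$ forces $1-\frac{F(s+1)}{F(s)}>0$, i.e. $\frac{F(s+1)}{F(s)}<1$, while the upper bound gives $1-\frac{F(s+1)}{F(s)}<\frac{2(M-1)f(M)}{M^{s+1}}$. Setting $c_1=\tfrac{2(M-1)f(M)}{M}$ then rearranges to $\frac{F(s+1)}{F(s)}\geqslant 1-\frac{c_1}{M^{s}}$. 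Because $N=1$ forces $M\geqslant 2$, the term $c_1/M^{s}\to 0$, so after possibly enlarging $K_1$ the displayed quantity $1-c_1/M^{s}$ is itself positive, completing the chain.

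For the case $N>1$ the structure is identical, now applied to Lemma~\ref{MandN}(ii); the one extra check is the sign of the constant $f(M)(1-N^{1-\alpha})$ in the prefactor. Since $\alpha=\log M/\log N>1$ we have $1-\alpha<0$, hence $N^{1-\alpha}<1$ and $1-N^{1-\alpha}>0$, so the prefactor is again positive. Choosing $K_2$ so that the limiting expression lies in $(\tfrac12,2)$ for $s\geqslant K_2$ gives $1-N\frac{F(s+1)}{F(s)}>0$, hence $\frac{F(s+1)}{F(s)}<\tfrac1N$, along with $1-N\frac{F(s+1)}{F(s)}<\frac{2f(M)(1-N^{1-\alpha})}{f(N)N^{(\alpha-1)s}}$. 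Dividing by $N$ and setting $c_2=\tfrac{2f(M)(1-N^{1-\alpha})}{Nf(N)}$ yields the stated lower bound, and $N^{(\alpha-1)s}\to\infty$ again lets me enlarge $K_2$ so that $\tfrac1N-c_2/N^{(\alpha-1)s}$ stays positive.

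Since the corollary is a near-mechanical consequence of the lemma, I do not expect a genuine obstacle. The only points requiring care are tracking the sign of the prefactors (immediate for $N=1$, and dependent on $\alpha>1$ for $N>1$) and choosing the thresholds $K_i$ large enough that the explicitly displayed lower bounds remain strictly positive, which is automatic because $M^{s}$ and $N^{(\alpha-1)s}$ both diverge.
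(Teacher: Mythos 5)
Your proposal is correct and follows exactly the route the paper intends: the paper gives no explicit proof, stating only that the corollary is an ``immediate'' consequence of Lemma~\ref{MandN}, and your argument supplies precisely the missing details (trapping the limiting expression in a neighbourhood of $1$, using positivity of the prefactors --- including the check that $1-N^{1-\alpha}>0$ --- to extract the strict upper bounds, and reading off $c_i$ and $K_i$ from the resulting inequalities). No gaps; this is the intended derivation, just written out.
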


We are now in a position to prove the Theorem \ref{main}.

\begin{proof}[Proof of Theorem \ref{main}]  As in the proof of Lemma \ref{MandN}, we split this proof into the two cases, $N=1$ and $N>1$, where $N$ is the minimal index of nonzero coefficients of $F(s)$.

{\sc Case $N=1$.} 
Let $c_1,K_1>0$ be such that the conclusion of Corollary \ref{govergasym}{\em (i)} holds, and for all $k$, set $$A(k)=1-\frac{c_1}{M^{k}}.$$ Then we have $A(k)\leqslant F(k+1)/F(k)<1$ and $A(k)$ is a positive function for $k\geqslant K_1$. Also, $$\frac{1}{1-A(k-2)}=\frac{M^{k-2}}{c_1}.$$ 

Set $$\lambda(k)=2\cdot F(s_0)\cdot\big(1-A(k-2)\big)=2\cdot F(s_0)\cdot\frac{c_1}{M^{k-2}}.$$ Then  
$$\lambda(k)\cdot \frac{1}{1-A(k-2)} =2\cdot F(s_0)> F(k),$$ since $F(s)$ is a monotonically decreasing function of $s\geqslant s_0$.

We may now apply Proposition \ref{prop:decreaseN}, with $K=K_1$, so that 
		$$H_n^{(r)}[F] < F(2+r)\prod_{k=2}^n\lambda(2k+r)\leqslant F(s_0)\prod_{k=2}^n\lambda(2k),$$
for all integers $n\geqslant 2$ and $r\geqslant K$. Thus \begin{align*}
\log H_n^{(r)}[F] &< \log\left(F(s_0)\prod_{k=2}^n\lambda(2k)\right) \\
&=\log F(s_0)+\sum_{k=2}^n \log \lambda(2k)\\
&=\log F(s_0)+\sum_{k=2}^n \log \left(2\cdot F(s_0)\cdot \frac{c_1}{M^{2k-2}}\right)\\
&=\log F(s_0)+(n-1)\log(2 F(s_0) c_1 M^2) -2\log M\cdot\frac{n(n+1)-2}{2}\\
&\sim -n^2 \log M.
\end{align*} This completes the $N=1$ case.

{\sc Case $N>1$.}
Let $c_2,K_2>0$ be such that conclusion of Corollary \ref{govergasym}{\em (i)} holds, and for all $k$, set $$A(k)=\frac{1}{N}-\frac{c_2}{N^{(\alpha -1)k}}.$$ Then we have $A(k)\leqslant F(k+1)/F(k)<1/N$ and $A(k)$ is a positive function for $k\geqslant K_2$. 

Set $$\lambda(k)=2N\cdot F(s_0)\cdot\big(1/N-A(k-2)\big)=2N\cdot F(s_0)\cdot\frac{c_2}{N^{(\alpha-1)(k-2)}}.$$ Then  
$$\lambda(k)\cdot \frac{1/N}{1/N-A(k-2)} =2\cdot F(s_0)> F(k),$$ since $F(s)$ is a monotonically decreasing function of $s\geqslant s_0$.

We may now apply Proposition \ref{prop:decreaseN}, with $K=K_2$, so that 
		$$H_n^{(r)}[F] < F(2+r)\prod_{k=2}^n\lambda(2k+r)\leqslant F(s_0)\prod_{k=2}^n\lambda(2k),$$
for all integers $n\geqslant 2$ and $r\geqslant K$. Thus \begin{align*}
\log H_n^{(r)}[F] &< \log\left(F(s_0)\prod_{k=2}^n\lambda(2k)\right) \\
&=\log F(s_0)+\sum_{k=2}^n \log \lambda(2k)\\
&=\log F(s_0)+\sum_{k=2}^n \log \left(2N\cdot F(s_0)\cdot\frac{c_2}{N^{(\alpha-1)(k-2)}}\right)\\
&=\log F(s_0)+(n-1)\log(2N F(s_0) c_2 N^{2(\alpha-1)})\\
&\qquad\qquad -2(\alpha-1)\log N\cdot\frac{n(n+1)-2}{2}\\
&\sim -n^2\cdot 2(\alpha-1)\log N.
\end{align*} Recalling that $\alpha>1$ completes the $N>1$ case, and with that, the proof of the theorem.
\end{proof}

\begin{proof}[Proof of Theorem \ref{thm:main}]
To see how Theorem \ref{thm:main} is deduced from Theorem \ref{main}, suppose that the conclusion of Theorem \ref{main} holds; that is, let $N_0,R>0$ be such that for $n\geqslant N_0$ and $r\geqslant R$, we have $\log H_n^{(r)}[F] < -c n^2$. Now suppose that $F(k)$ is rational for all integers $k\geqslant R+2$.  As in the statement of Theorem \ref{thm:main}, write ${p_k}/{q_k}=F(k)$ and set
\[
	\D_m[F] = {\rm lcm}(q_{R+2},q_{R+3},\ldots,q_{R+m}).
\]
Then, using the positivity of $H_n^{(r)}[F]$ given by Theorem \ref{main}, we have
\[
	\big(\D_{2n}[F]\big)^n\cdot H_n^{(r)}[F]\in\mathbb{N}, 
\]
so that 
\begin{equation*}\label{g0}
	\log H_n^{(r)}[F]+n\log \D_{2n}[F] \geqslant 0,
\end{equation*}
for all integers $n\geqslant 1$. If the sequence $\D_m[F]$ satisfied $\D_m\leqslant C^m$ for some $C\in(1,e^c)$ where $c>0$ is as given by Theorem \ref{main} for large enough $m$, then using Theorem~\ref{main}, for large enough $n$ there is a $\delta>0$ such that $$0\leqslant \log H_n^{(r)}[F]+n\log \D_{2n}[F] =\log H_n^{(r)}[F]+(c-\delta)n^2<-\delta n^2,$$ which is a contradiction for $n$ large enough.
\end{proof}

\section{Concluding remarks}

Note that repeated use of Dodgson condensation gives \begin{align*}
H_{n+1}^{(r)}[h]&=H_{n}^{(r)}[h]\cdot \frac{H_{n}^{(r+2)}[h]}{H_{n-1}^{(r+2)}[h]}\cdot\left(1-\frac{\left(H_{n}^{(r+1)}[h]\right)^2}{H_{n}^{(r)}[h]\cdot H_{n}^{(r+2)}[h]}\right)\\
&=H_{n}^{(r)}[h]\cdot \frac{H_{2}^{(r+2(n-1))}[h]}{H_{1}^{(r+2(n-1))}[h]}\cdot\prod_{j=0}^{n-2}\left(1-\frac{\left(H_{n-j}^{(r+(j+1))}[h]\right)^2}{H_{n-j}^{(r)}[h]\cdot H_{n-j}^{(r+2j)}[h]}\right),
\end{align*} so, replacing $n+1$ with $n$, we have \begin{align*}H_{n}^{(r)}[h]
&=H_{n-1}^{(r)}[h]\cdot \frac{H_{2}^{(r+2(n-2))}[h]}{H_{1}^{(r+2(n-2))}[h]}\cdot\prod_{j=0}^{n-3}\left(1-\frac{\left(H_{n-j-1}^{(r+(j+1))}[h]\right)^2}{H_{n-j-1}^{(r)}[h]\cdot H_{n-j-1}^{(r+2j)}[h]}\right)\\
&=h(2+r)\cdot \left(\prod_{i=2}^n\frac{H_{2}^{(r+2(i-2))}[h]}{H_{1}^{(r+2(i-2))}[h]}\right)\cdot\prod_{i=2}^n\prod_{j=0}^{i-3}\left(1-\frac{\left(H_{i-j-1}^{(r+(j+1))}[h]\right)^2}{H_{i-j-1}^{(r)}[h]\cdot H_{i-j-1}^{(r+2j)}[h]}\right)\\
&=h(2+r)\cdot \left(\prod_{i=2}^n\frac{H_{2}^{(r+2(i-2))}[h]}{h(r+2(i-1))}\right)\cdot\prod_{i=2}^n\prod_{j=0}^{i-3}\left(1-\frac{\left(H_{i-j-1}^{(r+(j+1))}[h]\right)^2}{H_{i-j-1}^{(r)}[h]\cdot H_{i-j-1}^{(r+2j)}[h]}\right).
\end{align*} In the case of a nondegenerate ordinary Dirichlet series $F(s)$ with nonnegative coefficients with $r\geqslant s_0-2$, using asymptotics for $F(s)$, gives some $c>0$ such that $$\log\left(\prod_{i=2}^n\frac{H_{2}^{(r+2(i-2))}[F]}{F(r+2(i-1))}\right)\sim -cn^2.$$ As our method is based off using this term with a constant upper bound for the double product above, Theorem \ref{main} cannot be improved using our method. Getting upper bounds on the double product involves obtaining lower bounds on $\log H_n^{(r)}[F]$; we hope to address this in future work.\\

\noindent\textbf{Acknowledgements.} We thank Wadim Zudilin for valuable remarks on an earlier version of this paper.

\providecommand{\bysame}{\leavevmode\hbox to3em{\hrulefill}\thinspace}
\providecommand{\MR}{\relax\ifhmode\unskip\space\fi MR }
\providecommand{\MRhref}[2]{%
  \href{http://www.ams.org/mathscinet-getitem?mr=#1}{#2}
}
\providecommand{\href}[2]{#2}

\end{document}